\documentclass[12pt]{article}
\usepackage[utf8]{inputenc}
\usepackage[T1,T2A]{fontenc}
\usepackage[russian,english]{babel}
\usepackage[a4paper,left=20mm,right=20mm, top=10mm,bottom=30mm,bindingoffset=0cm]{geometry}
\usepackage{amsmath}
\usepackage{amssymb}
\usepackage{amsthm}
\usepackage{indentfirst}
\usepackage{diagbox}
\usepackage{graphicx}
\usepackage{wrapfig}
\usepackage{pgfpages}

\usepackage[hidelinks]{hyperref}
\hypersetup{
    pdftitle={On the Growth of Denominators of Simultaneous Best Diophantine Approximations in a Norm Induced by an Inner Product},
    pdfauthor={L.M. Shatunov}
}

\usepackage{caption}
\usepackage{cancel}

\theoremstyle{definition}

\newtheorem{proposition}{Proposition}
\newtheorem{lemma}{Lemma}
\newtheorem{corollary}{Corollary}
\newtheorem*{corollary*}{Corollary}

\theoremstyle{plain}
\newtheorem{theorem}{Theorem}
\newtheorem*{theorem*}{Theorem}

\theoremstyle{remark}

\title{On the Growth of Denominators of Simultaneous Best Diophantine Approximations in a Norm Induced by an Inner Product}
\author{L.M. Shatunov}
\date{}

\begin{document}
\selectlanguage{english}

    \maketitle

    \begin{abstract}
        For $n$-dimensional simultaneous best Diophantine approximations in an arbitrary norm induced by an inner product, for $n\geq2$ we prove
        $q_{k+2^n}\geq q_k+\min\{q_{k+2^{n-1}},2q_{k+1}\}$.
        This yields
        $$
        \displaystyle g_n(\alpha):=\liminf_{m\to\infty}(q_m)^{1/m}\geq\varphi^{1/2^{n-1}}, \qquad \text{for } \ \varphi = \dfrac{1 + \sqrt{5}}{2}.
        $$ Consequently, \ensuremath{\displaystyle G(n):=\inf_{\alpha\in\mathbb R^n\setminus\mathbb Q^n}g_n(\alpha)\geq\varphi^{1/2^{n-1}}} and
        $\displaystyle \underline{\mathcal D}_n(\alpha)\leq\left\lfloor 2^{n-1}\frac{\log2}{\log\varphi}\right\rfloor+1$, where \(\underline{\mathcal D}_n(\alpha)\) is a quantity related to multidimensional generalizations of the three-distance theorem.
        In particular,$$g_2(\alpha)\geq\sqrt\varphi, \qquad g_3(\alpha)\geq\sqrt[4]{\varphi}, \qquad \underline{\mathcal D}_2(\alpha)\leq3, \qquad \underline{\mathcal D}_3(\alpha)\leq6.$$
    \end{abstract}

    \bigskip

    This paper is devoted to the exponential growth rates of the denominators of simultaneous best approximations in a space equipped with a norm induced by an inner product. The proof is based on additive combinatorics over $\mathbb F_2$ and uses, in particular, Kneser's theorem~\cite{kneser}.

    The paper is organized as follows. Section~1 gives the necessary definitions and formulates the problems concerning the growth of simultaneous best Diophantine approximations and multidimensional generalizations of the three-distance theorem. Section~2 states the main $n$-dimensional denominator inequality and its consequences for the exponential growth rate and for $\underline{\mathcal D}_n(\alpha)$. Section~3 gives an outline of the proof and introduces the abbreviated notation used in it. Section~4 proves the stated denominator inequality.

    \section{Introduction}

    \subsection{Simultaneous best Diophantine approximations}

    In $\mathbb R^n$, consider an arbitrary norm $|\cdot|$, induced by an inner product $\langle\cdot,\cdot\rangle$, that is,
    $
        \ensuremath{|x|=\sqrt{\langle x,x\rangle}}.
    $
    After a linear change of coordinates, this setting is equivalent to the setting with an arbitrary full-rank lattice in an $n$-dimensional real inner-product space used by Shulga~\cite{shulga}.

    For fixed $\alpha\in\mathbb R^n\setminus\mathbb Q^n$, set
    $$
        \delta_q=\min_{\mathbf p\in\mathbb Z^n}\ensuremath{|q\alpha-\mathbf p|}.
    $$
    The denominators of best approximations are defined to be the increasing sequence $1=q_1<q_2< q_3 <\ldots$ consisting of all positive integers $q$ such that $\delta_t>\delta_q$ for every $0<t<q$.

    In the present paper we study the growth and structure of denominators of simultaneous best approximations in arbitrary dimension. Recent work of Shutov~\cite{shutov} and Shulga~\cite{shulga} revealed the connection between denominator-growth inequalities and multidimensional nearest-neighbor versions of the three-distance theorem, building on Chevallier's results~\cite{chevallier_dist} and the geometric approach of Haynes and Marklof~\cite{haynes_marklof}. We obtain a new dimension-uniform denominator inequality and derive consequences for the associated growth exponent and for the asymptotic number of nearest-neighbor distances.

    As the main characteristic of the growth of this sequence, we consider
    $$
        g_n(\alpha)=\liminf_{m\to\infty}\left(q_m\right)^{1/m},
        \qquad
        G(n)=\inf_{\alpha\in\mathbb R^n\setminus\mathbb Q^n}g_n(\alpha).
    $$
    For the Euclidean norm, we denote the corresponding quantity by \ensuremath{G_e(n)}.
    Throughout the paper, $\theta_s$ denotes the largest root of the equation $\theta^s = \theta + 1$.

    In the one-dimensional case, the denominators of best approximations coincide with the denominators of the convergents of the continued fraction, and hence $q_{m+2}=a_{m+2}q_{m+1}+q_m \Rightarrow q_{m+2} \geq q_{m+1} + q_m$.
    Thus $g_1(\alpha)\geq\varphi$, where $\varphi=\dfrac{1+\sqrt5}{2} = 1.6180339^+$. But $g_1(\varphi) = \varphi$, and therefore $G(1)=\varphi$~\cite{khinchin}.


    The study of the growth of denominators was initiated by Lagarias~\cite{lagarias}. For an arbitrary norm in $\mathbb R^n$, he proved
    $$
        q_{m+2^{n+1}}\geq2q_{m+1}+q_m.
    $$
    It follows that $G(n)\geq\lambda_n^{\mathrm L}$, where $\lambda_n^{\mathrm L}>1$ is the root of
    $
        x^{2^{n+1}}=2x+1.
    $
    In particular, this general result gives 
    $$G(2)\geq1.1620043^+, \qquad G(3)\geq1.0743174^+.
    $$

    In $\mathbb{R}^n$ equipped with an arbitrary norm \ensuremath{\|\cdot\|}, there is also a general bound in terms of the kissing number \ensuremath{K(n,\|\cdot\|)}:
    $$
        q_{m+K(n,\|\cdot\|)}\geq q_{m+1}+q_m.
    $$
    A proof of this statement is given, for example, in the work of Romanov~\cite{romanov}. For the Euclidean norm, this gives \ensuremath{G_e(2)\geq\theta_6=1.1347241^+} and \ensuremath{G_e(3)\geq\theta_{12}=1.0621691^+}; however, the bound for \ensuremath{G_e(2)} is trivially improved to $\theta_5=1.1673039^+$, as also noted by Romanov. Shulga~\cite{shulga} also discusses the corresponding refinement in terms of the strict Euclidean kissing number $\sigma_n^{>}$, which yields the recurrence $q_{m+\sigma_n^{>}}\geq q_{m+1}+q_m$.

    For the Euclidean norm, Romanov~\cite{romanov} proved 
    $$q_{m+4}\geq q_{m+1}+q_m. \qquad \text{Hence } \ \ensuremath{G_e(2)\geq\theta_{4}=1.220744^+}.
    $$

    Ermakov strengthened this result to \ensuremath{G_e(2)\geq1.228043}~\cite{ermakov}, proving that for every positive integer $m$ one has
    $$
        q_{m+3}+q_{m+2}\geq2q_{m+1}+q_m
        \quad\text{or}\quad
        q_{m+4}\geq q_{m+2}+q_m,
    $$
    and, among any two consecutive values of $m$, the first inequality must hold for at least one of them. A finite computer search over the admissible transitions then gives the constant $1.228043$.


    More details and other results can be found in surveys of Moshchevitin~\cite{moshchevitin_survey} and Chevallier~\cite{chevallier_survey}.

    Very recently Shulga obtained a result in an equivalent general setting~\cite{shulga}. He proved that either $q_{m+2^n}\geq2q_{m+1}$, or the set $\{1,\ldots,2^n\}$ can be partitioned into pairs $\{j,k\}$, $j<k$, for which $q_{m+k}=q_m+q_{m+j}$. In particular,
    $$
        q_{m+2^n}\geq
        \min\{2q_{m+1},\,q_m+q_{m+2^{n-1}}\}
    $$
    and, for $n\geq2$, it follows that
    $$
        G(n)\geq2^{1/(2^n-1)}.
    $$
    Thus $G(2)\geq2^{1/3}=1.259921^+$, and also $G(3)\geq2^{1/7}=1.1040895^+$.

    In the present paper, we strengthen this denominator inequality to
    $$
        q_{m+2^n}\geq q_m+\min\{q_{m+2^{n-1}},2q_{m+1}\},
    $$
    $$
        \text{which yields } \quad G(n)\geq\varphi^{1/2^{n-1}},\quad n\geq2.
    $$
    In particular,
    $$
        G(2)\geq\sqrt\varphi=1.2720196^+,
        \qquad
        G(3)\geq\sqrt[4]{\varphi}=1.127838^+.
    $$

    In the Euclidean case, our bound is stronger than the bound obtained from the strict kissing number for $2\leq n\leq8$; the crossover already occurs in dimension $9$, where the bound $\sigma_9^{>}\leq363$ quoted in~\cite{shulga} gives \ensuremath{G_e(9)\geq\theta_{363}>\varphi^{1/256}}. For large dimensions, the kissing-number method is asymptotically stronger.

    \subsection{The three distance theorem and its generalizations}

    A related problem concerns distances between points of a Kronecker sequence on the torus. For $x,y\in\mathbb T^n=\mathbb R^n/\mathbb Z^n$ and fixed $\alpha\in\mathbb R^n\setminus\mathbb Q^n$ and $N\in\mathbb N \setminus \{0\}$, set
    $$
        \rho(x,y)=\min_{z\in\mathbb Z^n}\ensuremath{|x-y-z|},
        \qquad
        D_q(N)=\min_{\substack{0\leq t\leq N\\t\ne q}}
        \rho(q\alpha,t\alpha),
    $$
    $$
        \mathcal D_n(\alpha,N)=\#\{D_q(N)\colon0\leq q\leq N\}.
    $$
    Thus, $\mathcal D_n(\alpha,N)$ is the number of distinct distances from the points $0,\alpha,\ldots,N\alpha$ to their nearest neighbors. We also set
    $$
        \mathcal{D}_n(\alpha)=\sup_N \mathcal{D}_n(\alpha,N),
        \qquad
        \underline{\mathcal D}_n(\alpha)=\liminf_{N\to\infty}\mathcal D_n(\alpha,N).
    $$

    The problem is to determine the exact upper bound for $\mathcal{D}_n(\alpha)$ valid for all $\alpha$.

    The classical three distance theorem, originating in a conjecture of Steinhaus, was proved in the late 1950s; one of the first proofs is due to Sós~\cite{sos}. Namely, the points $0,\alpha,\ldots,N\alpha$ on the circle partition it into intervals of at most three distinct lengths~\cite{sos}.

    In dimension two, Chevallier established an exact relation between the number of nearest-neighbor distances and the denominators of best approximations~\cite{chevallier_dist}. If $q_m\leq N<q_{m+1}$ and $2q_l\leq N<2q_{l+1}$, then the number of distinct distances is equal to $m-l$ or $m-l+1$, according as whether $2q_{l+1}=N+1$ holds.

    For the Euclidean metric, Haynes and Marklof explicitly proved the two-dimensional five distance theorem~\cite{haynes_marklof}: for every $\alpha$ and every $N$, one has $\mathcal D_2(\alpha,N)\leq5$; they also proved that this bound is sharp. The upper bound $5$ itself already follows by combining Chevallier's lemma with Romanov's inequality for the denominators of simultaneous approximations; this connection was explicitly pointed out by Shutov~\cite{shutov}. In dimension $3$, Haynes and Marklof obtained the bound $13$ and conjectured that the optimal constant is $9$.

    Shutov showed how, using Chevallier's lemma, inequalities for the denominators of best approximations can be directly translated into uniform bounds for the number of distances~\cite{shutov}. In particular, if for some $T$ one has $q_{m+T}\geq2q_m$ for all $m$, then $\mathcal D_n(\alpha)\leq T+1$. In addition, Shutov studied the asymptotic quantity $\underline{\mathcal D}_n(\alpha)$ and showed that if $q_{m+T}\geq2q_m$ holds for infinitely many $m$, then $\underline{\mathcal D}_n(\alpha)\leq T$.

    Shulga~\cite{shulga} proved a general result which gives the bound
    $$
        \mathcal D_n(\alpha,N)\leq2^n+1.
    $$
    In particular, for $n=3$ he proved the nine distance theorem, and an example of Dettmann~\cite{dettmann} shows that the bound $9$ is sharp. The statements proved by Shulga also imply the growth estimate for denominators of best approximations
    $$
        q_{m+2^n} \geq q_m + q_{m + 1}>2q_m.
    $$

    Our denominator-growth estimate implies the following bound
    $$
        \underline{\mathcal D}_n(\alpha)
        \leq
        \left\lfloor 2^{n-1}\frac{\log2}{\log\varphi}\right\rfloor+1,
        \qquad n\geq2.
    $$
    $$
        \text{In particular,} \qquad
        \underline{\mathcal D}_2(\alpha)\leq3,
        \qquad
        \underline{\mathcal D}_3(\alpha)\leq6.
    $$

    \section{Main results}

    In this section we formulate the main denominator inequalities and immediately derive their consequences for the exponential growth exponent and for the asymptotic nearest-neighbor distance quantity $\underline{\mathcal D}_n(\alpha)$.

    \begin{theorem}\label{theorem:Rn}
        For every $n\geq2$ and every $k\geq1$,
        $$
            q_{k+2^n}\geq q_k+\min\{q_{k+2^{n-1}},2q_{k+1}\}.
        $$
    \end{theorem}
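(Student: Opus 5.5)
We argue by contradiction, following the parity-class (pigeonhole mod $2$) method of Lagarias and Shulga and strengthening it with additive combinatorics over $\mathbb F_2^n$. Fix $k$ and write $v_j=q_{k+j}\alpha-\mathbf p_{k+j}$ for the best-approximation vectors, $j=0,\dots,2^n$, with $\delta_{q_{k+j}}=|v_j|$ strictly decreasing in $j$. Suppose
$$q_{k+2^n}<q_k+\min\{q_{k+2^{n-1}},2q_{k+1}\}.$$
The basic tool is the following. If $0\le i<j$ and $(q_{k+j},\mathbf p_{k+j})\equiv(q_{k+i},\mathbf p_{k+i})\pmod 2$, then $w=(v_j\pm v_i)/2$ is an approximation vector with denominator $(q_{k+j}\pm q_{k+i})/2$. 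For the minus sign this denominator is positive and less than $q_{k+j}$. The parallelogram law, which is exactly where the inner-product hypothesis enters, gives
$$|v_j-v_i|^2+|v_j+v_i|^2=2|v_i|^2+2|v_j|^2<4|v_i|^2.$$
So one of $|(v_j\pm v_i)/2|$ is $<|v_i|$. If it is the minus sign, best approximation forces $(q_{k+j}-q_{k+i})/2\ge q_{k+i+1}$, roughly. If it is the plus sign, then $q_{k+i}<(q_{k+j}+q_{k+i})/2$ is a denominator whose error is smaller than $|v_i|$, which forces $(q_{k+j}+q_{k+i})/2\ge q_{k+i+1}$. A finer version compares with $|v_{i+1}|$ instead. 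From these we extract a Shulga-type alternative: either some relation gives $q_{k+2^n}\ge q_k+2q_{k+1}$, or the vectors are rigidly paired with $q_{k+b}=q_k+q_{k+a}$ (Shulga's theorem, which I take as available from~\cite{shulga}). In the paired case we need $q_{k+2^n}\ge q_k+q_{k+2^{n-1}}$, which is stronger than Shulga's $\min$.

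The key step is to study the map $\pi\colon j\mapsto (q_{k+j},\mathbf p_{k+j})\bmod 2\in\mathbb F_2^{n+1}$. Primitivity excludes $0$, so the images lie in a set of $2^{n+1}-1$ classes, but the $2^n+1$ indices carry much more structure. Under the contradiction hypothesis all $q_{k+j}$ lie in $[q_k,q_k+q_{k+2^{n-1}})$. Hence differences $q_{k+b}-q_{k+a}$ with $a<b$ are smaller than $q_{k+2^{n-1}}$, and sums of pairs and their halves are constrained as above.

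I would prove the following. Let $A=\pi(\{0,\dots,2^{n-1}\})$ and $B=\pi(\{2^{n-1},\dots,2^n\})$. Then the sumset $A+B$, or the set of pairwise sums within $\pi(\{0,\dots,2^n\})$, must avoid certain classes: a coincidence $\pi(a)+\pi(b)=\pi(c)$ yields, via a half-sum construction with three vectors, a lattice point that contradicts best approximation or produces one of the desired inequalities. This gives a small-doubling condition, $|A+B|<|A|+|B|-1$ in $\mathbb F_2^{n+1}$. Kneser's theorem then says $A+B$ is periodic with respect to a nontrivial subgroup $H$, so the parity classes are unions of cosets of $H$. One then descends: restrict to the sublattice corresponding to $H$, which reduces the effective dimension, and run an induction on $n$. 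The base case $n=2$ would be checked by hand with the plane-geometry constraint on six vectors, using the parallelogram law and angle bounds.

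The main obstacle is the translation step: proving that each forbidden additive coincidence really gives a contradiction, or the inequality $q_{k+2^n}\ge q_k+q_{k+2^{n-1}}$, under the assumption $q_{k+2^n}<q_k+2q_{k+1}$. This needs careful bookkeeping of which half-sums and half-differences have denominators in which intervals. I would first try to treat it uniformly by always comparing a candidate vector with $v_{i+1}$, where $i$ is the smallest index involved. The assumption $q_{k+2^n}<q_k+2q_{k+1}$ is what ensures that every such candidate has denominator below the next best-approximation denominator, which gives the contradiction.
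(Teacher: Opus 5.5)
There is a genuine gap. The additive step, which carries the whole proof, is not established, and as you set it up the three-vector argument you allude to cannot establish it. That argument is the identity $|x+y-z|^2+|x-y+z|^2+|-x+y+z|^2+|x+y+z|^2=4(|x|^2+|y|^2+|z|^2)$. This identity, not the two-vector parallelogram law, is where the inner product is really needed: for the half-difference the triangle inequality already gives $|(v_j-v_i)/2|<|v_i|$, so your $\pm$ dichotomy is moot. To exclude $\pi(a)+\pi(b)+\pi(c)=0$ with $a<b<c$, you need both $(q_{k+a}+q_{k+b}-q_{k+c})/2$ and $(q_{k+a}-q_{k+b}+q_{k+c})/2$ to lie in $(0,q_{k+a})$. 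That gives $|v_a\pm(v_b-v_c)|>2|v_a|$, hence $|-v_a+v_b+v_c|<2|v_c|$, which contradicts the record at $q_{k+c}$. These two conditions follow from $q_{k+2^n}<q_k+q_{k+2^{n-1}}$ only when the middle index satisfies $b\ge 2^{n-1}$. For $A+B$, or for all pairwise sums, a coincidence such as $\pi(0)+\pi(2^n)=\pi(1)$ is untouched, since excluding it would need $q_k+q_{k+1}>q_{k+2^n}$. So your avoidance claim, and the small-doubling inequality built on it, are unsupported. The right object is the tail $B=\pi(\{2^{n-1},\dots,2^n\})$ alone. Its $2^{n-1}+1$ classes are pairwise distinct, because a half-difference of two tail vectors has denominator $<q_k/2$ and error $<|v_i|$. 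Moreover, for $i\ne j$ in the tail and any $t$ with $\pi(i)+\pi(j)=\pi(t)$ (so $t\notin\{i,j\}$ by primitivity), the middle index of $\{i,j,t\}$ lies in the tail, so the argument applies. Note that this step uses only $q_{k+2^n}<q_k+q_{k+2^{n-1}}$, not the hypothesis involving $2q_{k+1}$ as you expect.

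Second, Kneser should be used as a lower bound, not for periodicity plus descent. Kneser does not make $A$ or $B$ unions of $H$-cosets; only the sumset is. Passing to ``the sublattice corresponding to $H$'' with induction on $n$ has no justification, since a subgroup of $\mathbb F_2^{n+1}$ does not produce a lower-dimensional best-approximation problem. What works directly is the following. In an $\mathbb F_2$-space the stabilizer has order a power of $2$, so a set of $2^{n-1}+1$ distinct elements satisfies $|B+B|\ge 2^n$, giving at least $2^n-1$ nonzero sums of distinct elements. These sums avoid all $2^n+1$ classes $\pi(j)$, which are nonzero by primitivity. So the classes take at most $(2^{n+1}-1)-(2^n-1)=2^n$ values and two of them coincide. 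The half-difference then gives $q_{k+2^n}\ge q_{k+i}+2q_{k+i+1}\ge q_k+2q_{k+1}$.

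Finally, your Shulga framing is inverted. Shulga's non-paired alternative is $q_{k+2^n}\ge 2q_{k+1}$, not $q_k+2q_{k+1}$. Meanwhile the paired case already yields $q_{k+2^n}\ge q_k+q_{k+2^{n-1}}$, because some smaller element of a pair is $\ge 2^{n-1}$. Had the alternative you quote been available, the theorem would be immediate; the actual content is upgrading $2q_{k+1}$ to $q_k+2q_{k+1}$.
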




    \begin{corollary}\label{cor:Rn-growth}
        For every $n\geq2$ and every $\alpha\in\mathbb R^n\setminus\mathbb Q^n$ one has
        $$
            g_n(\alpha)\geq\varphi^{1/2^{n-1}}.
        $$
        Consequently,
        $
            G(n)\geq\varphi^{1/2^{n-1}}.
        $
    \end{corollary}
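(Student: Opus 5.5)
We derive the corollary from Theorem~\ref{theorem:Rn} by a comparison argument for the recurrence
$$
q_{k+2^n}\geq q_k+\min\{q_{k+2^{n-1}},2q_{k+1}\}.
$$
Set $N=2^{n-1}$ and $\lambda=\varphi^{1/N}$, so that $\lambda^{2N}=\lambda^N+1$. We will show that $q_m\geq c\lambda^m$ for some constant $c>0$ and all $m$. Taking $m$-th roots then gives $g_n(\alpha)\geq\lambda$, and taking the infimum over $\alpha$ gives the bound for $G(n)$.

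\emph{Reduction to a single recurrence.} We want each branch of the minimum to dominate $q_{k+N}$, so that
$$
q_{k+2N}\geq q_k+q_{k+N}
$$
in every case. The branch $q_{k+N}$ is immediate. For the branch $2q_{k+1}$, we need $2q_{k+1}\geq q_{k+N}$, and this does not follow from monotonicity. So we keep the case distinction and argue by induction with the hypothesis $q_j\geq c\lambda^j$ for all $j<m$.

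\emph{Inductive step.} Write $m=k+2N$ and consider the two cases.

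\textbf{Case 1: the minimum is $q_{k+N}$.} Then
$$
q_m\geq c(\lambda^k+\lambda^{k+N})=c\lambda^{k+2N}=c\lambda^m,
$$
using $\lambda^{2N}=\lambda^N+1$.

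\textbf{Case 2: the minimum is $2q_{k+1}$.} Then $q_m\geq c(\lambda^k+2\lambda^{k+1})$. It therefore suffices to check
$$
1+2\lambda\geq\lambda^{2N}=1+\varphi,\qquad\text{i.e.}\qquad 2\lambda\geq\varphi.
$$
This holds because $\lambda>1$ and $\varphi<2$.

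Choose $c$ small enough that $q_j\geq c\lambda^j$ holds for the initial indices $j\leq 2N$; this is possible since every $q_j\geq1$. The induction then covers all $m$.

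The step that needs care is this inequality $2\lambda\geq\varphi$. It explains why the exponent is $1/2^{n-1}$ and why $n\geq2$ is needed. For $n\geq1$ it holds trivially, since $\varphi<2$, so there is no real obstacle. The main thing to verify is that the induction hypothesis is applied only to indices $k,\ k+1,\ k+N$, all of which are smaller than $m$; this is clear. Finally, since $q_m\geq c\lambda^m$, we get $\liminf_{m\to\infty} q_m^{1/m}\geq\lambda$.
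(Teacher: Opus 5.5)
Your proof is correct and is essentially the paper's argument. It uses the same $\lambda=\varphi^{1/2^{n-1}}$, the same small constant for the initial indices, and the same strong induction via Theorem~\ref{theorem:Rn}; your two cases are just the paper's single estimate $q_{k+2N}\geq c\lambda^k\bigl(1+\min\{\lambda^{N},2\lambda\}\bigr)=c\lambda^{k+2N}$ written out separately using $\varphi<2\lambda$.

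Your closing remark is slightly off, though it does not affect the proof. The exponent $1/2^{n-1}$ comes from the branch $q_k+q_{k+N}$ through $\lambda^{2N}=\lambda^N+1$, and the $2q_{k+1}$ branch is never binding. The condition $n\geq2$ enters only because Theorem~\ref{theorem:Rn} is stated for $n\geq2$.
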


    \begin{proof}
        Put $M=2^n$ and $\lambda=\varphi^{1/2^{n-1}}$. Then
        $$
            \lambda^{M/2}=\varphi,
            \qquad
            \lambda^M=\varphi^2=1+\varphi,
            \qquad
            \varphi<2\lambda.
        $$
        Choose $C>0$ such that $q_j\geq C\lambda^j$ for $1\leq j\leq M$. We prove by induction that $q_l\geq C\lambda^l$.

        If the assertion has already been proved up to index $l+M-1$, then Theorem~\ref{theorem:Rn} gives
        $$
            q_{l+M}\geq q_l+\min\{q_{l+M/2},2q_{l+1}\}
            \geq C\lambda^l\bigl(1+\min\{\lambda^{M/2},2\lambda\}\bigr)
            =C\lambda^{l+M}.
        $$
    \end{proof}

    \begin{corollary}\label{cor:Rn-distances}
        For every $n\geq2$ and every $\alpha\in\mathbb R^n\setminus\mathbb Q^n$ ,
        $$
            \underline{\mathcal D}_n(\alpha)
            \leq
            \left\lfloor
                2^{n-1}\frac{\log2}{\log\varphi}
            \right\rfloor+1.
        $$
        In particular,
        $
            \underline{\mathcal D}_2(\alpha)\leq3,
            \underline{\mathcal D}_3(\alpha)\leq6.
        $
    \end{corollary}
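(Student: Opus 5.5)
The plan is to combine Corollary~\ref{cor:Rn-growth} with Shutov's criterion quoted in the introduction: if $q_{m+T}\geq 2q_m$ holds for infinitely many $m$, then $\underline{\mathcal D}_n(\alpha)\leq T$. Set
$$
T=\left\lfloor 2^{n-1}\frac{\log2}{\log\varphi}\right\rfloor+1,
\qquad
\lambda=\varphi^{1/2^{n-1}}.
$$
Then $T>2^{n-1}\log2/\log\varphi$ strictly, which is equivalent to $\lambda^T>2$, that is, $2^{1/T}<\lambda$. By Shutov's criterion it therefore suffices to show that $q_{m+T}\geq 2q_m$ for infinitely many $m$.

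We argue by contradiction. Suppose there is $m_0$ such that $q_{m+T}<2q_m$ for all $m\geq m_0$. Iterating gives $q_{m_0+r+jT}<2^j q_{m_0+r}$ for every $0\leq r<T$ and every $j\geq0$. Put $C=\max_{0\leq r<T}q_{m_0+r}$. Writing an arbitrary $m\geq m_0$ as $m=m_0+r+jT$ with $0\le r<T$, we get
$$
q_m<C\,2^{j}\leq C\,2^{(m-m_0)/T}.
$$
Hence $\limsup_{m\to\infty}q_m^{1/m}\leq 2^{1/T}$, and in particular
$$
g_n(\alpha)=\liminf_{m\to\infty}q_m^{1/m}\leq 2^{1/T}<\lambda .
$$
This contradicts Corollary~\ref{cor:Rn-growth}, which gives $g_n(\alpha)\geq\lambda$. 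So $q_{m+T}\geq2q_m$ holds for infinitely many $m$, and Shutov's criterion yields $\underline{\mathcal D}_n(\alpha)\leq T$.

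For the special cases we only need to evaluate $T$, using $\log2/\log\varphi\approx1.4404$. For $n=2$ we get $2\cdot1.4404\approx2.88$, so $T=3$. For $n=3$ we get $4\cdot1.4404\approx5.76$, so $T=6$.

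The only delicate point is the strict inequality $\lambda^T>2$. It is guaranteed by the choice ``floor${}+1$'', and it holds even in the hypothetical case where $2^{n-1}\log2/\log\varphi$ is an integer; that case cannot occur anyway, by irrationality. Everything else is a direct application of results already stated.
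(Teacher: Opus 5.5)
Your proposal is correct and follows the same argument as the paper. You choose $T_n=\lfloor 2^{n-1}\log2/\log\varphi\rfloor+1$ so that $2^{1/T_n}<\varphi^{1/2^{n-1}}\le g_n(\alpha)$, iterate along residue classes mod $T_n$ to show $q_{m+T_n}\ge 2q_m$ for infinitely many $m$, and conclude with Shutov's criterion. One cosmetic nit: the iterated bound $q_{m_0+r+jT}<2^j q_{m_0+r}$ is strict only for $j\ge 1$ and should read $\le$ at $j=0$, which does not affect the conclusion.
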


    \begin{proof}
        Put
        $$
            T_n=\left\lfloor
                2^{n-1}\frac{\log2}{\log\varphi}
            \right\rfloor+1.
        $$
        By Corollary~\ref{cor:Rn-growth},
        $$
            g_n(\alpha)
            \geq\varphi^{1/2^{n-1}}
            >2^{1/T_n}.
        $$
        Therefore the inequality $q_{m+T_n}\geq2q_m$ holds for infinitely many $m$. Indeed, otherwise $q_{m+T_n}<2q_m$ for every sufficiently large $m$, and iteration along each residue class modulo $T_n$ would imply
        $$
            \limsup_{m\to\infty}q_m^{1/m}\leq2^{1/T_n},
        $$
        a contradiction. Applying Lemma~3 of~\cite{shutov} with $T=T_n$, we obtain the required estimate.
    \end{proof}

    \section{Outline of the proof}

    We continue to work with the norm introduced above and the notation $\delta_q,q_m$. For each $q$, choose and fix one of the vectors $\mathbf p(q)$ at which the minimum in the definition of $\delta_q$ is attained. All subsequent arguments hold for any such fixed choice of $\mathbf p(q)$.

    Set $\mathbf r(q)=q\alpha-\mathbf p(q)$ and $L(q)=(q,\mathbf p(q))^T$. Extend the definitions by
    $\delta_{-q}=\delta_q$, $\mathbf p(-q)=-\mathbf p(q)$, $\mathbf r(-q)=-\mathbf r(q)$, and $L(-q)=-L(q)$. In addition,
    $\delta_0=\mathbf p(0)=\mathbf r(0)=L(0)=0$.

    Choose an arbitrary $k\in\mathbb N\setminus\{0\}$ and fix it. Put
    $$
        M=2^n,\qquad s=2^{n-1}+1,
    $$
    $$
        \text{and set} \qquad
        Q_i=q_{k+i-1},\qquad
        A_i=\mathbf r(Q_i),\qquad
        L_i=L(Q_i),\qquad
        R=\ensuremath{|A_1|}.
    $$
    For an integer vector $X$, let $\overline X$ denote its residue class modulo~$2$.

    Assume that
    $$
        Q_{M+1}<Q_1+Q_s
        \qquad\text{and}\qquad
        Q_{M+1}<Q_1+2Q_2.
    $$
    The proof of Theorem~\ref{theorem:Rn} consists of the following steps.
    \begin{enumerate}
        \item The classes $\overline L_s,\ldots,\overline L_{M+1}$ are pairwise distinct.
        \item Among the classes $\overline L_i+\overline L_j$, $s\leq i<j\leq M+1$, there are at least $M-1$ distinct ones.
        \item The set
        $$
            P=\{\overline L_i+\overline L_j:s\leq i<j\leq M+1\}
        $$
        is disjoint from $\{\overline L_1,\ldots,\overline L_{M+1}\}$.
        \item By the pigeonhole principle there exist $1 \leq i<j \leq M + 1$ such that $\overline L_i=\overline L_j$.
        \item The equality $\overline L_i=\overline L_j$ implies
        $$
            Q_j\geq Q_i+2Q_{i+1},
        $$
        and therefore
        $
            Q_{M+1}\geq Q_1+2Q_2,
        $
        a contradiction.
    \end{enumerate}

    \section{Proof of the theorem}

    Throughout this section, $n\geq2$, and we use the notation introduced in Section~3.



    \begin{lemma}
    \label{gcd L = 1}
        For every $m$, the vector $L(q_m)$ is primitive, that is, the gcd of its coordinates is equal to $1$.
    \end{lemma}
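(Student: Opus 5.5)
We argue by contradiction, using only the minimality built into the definition of a best approximation denominator. Suppose $L(q_m)=(q_m,\mathbf p(q_m))^T$ is not primitive. Then there is an integer $d\geq2$ dividing $q_m$ and every coordinate of $\mathbf p(q_m)$. Set
$$
    t=\frac{q_m}{d},\qquad \mathbf p'=\frac{\mathbf p(q_m)}{d}\in\mathbb Z^n .
$$
Then $0<t<q_m$, since $q_m\geq1$ and $d\geq2$, and
$$
    t\alpha-\mathbf p'=\frac{1}{d}\bigl(q_m\alpha-\mathbf p(q_m)\bigr)=\frac{1}{d}\,\mathbf r(q_m).
$$

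Next we compare norms. The norm is absolutely homogeneous, so $|t\alpha-\mathbf p'|=\delta_{q_m}/d$. Hence
$$
    \delta_t\leq|t\alpha-\mathbf p'|=\frac{\delta_{q_m}}{d}\leq\frac{\delta_{q_m}}{2}\leq\delta_{q_m}.
$$
This contradicts the defining property of $q_m$, namely that $\delta_t>\delta_{q_m}$ for every $0<t<q_m$. So the gcd of the coordinates of $L(q_m)$ equals $1$.

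The only point needing care is the case $\delta_{q_m}=0$, where $\delta_{q_m}/d<\delta_{q_m}$ fails. The argument above avoids it by using the non-strict chain $\delta_t\leq\delta_{q_m}$, which already contradicts the strict inequality $\delta_t>\delta_{q_m}$. In fact $\delta_q>0$ for every $q\neq0$, because $\alpha\notin\mathbb Q^n$. Nothing here depends on the inner-product structure; homogeneity of the norm is all that is used.
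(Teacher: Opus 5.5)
Your proof is correct and is essentially the paper's argument: divide $L(q_m)$ by $d\geq 2$ to get a smaller denominator $q_m/d$ whose approximation vector has norm $\delta_{q_m}/d$, contradicting the record property of $q_m$. The only difference is cosmetic. The paper writes the strict inequality $\delta_{q_m}/d<\delta_{q_m}$, which tacitly uses $\delta_{q_m}>0$, whereas your non-strict chain avoids needing that.
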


    \begin{proof}
        Suppose that there is an integer $d > 1$ such that $L(q_m) = d\begin{pmatrix} q' \\ \mathbf{p}' \end{pmatrix}$.

        Then $q' < q_m \Rightarrow \delta_{q'} > \delta_{q_m}$. However,
        $$
        \delta_{q'} = \min\limits_{t \in \mathbb{Z}^n}|q'\alpha - t| \leq \left| q'\alpha - \mathbf{p}' \right| = \dfrac{\delta_{q_m}}{d} < \delta_{q_m},
        $$
        a contradiction.
    \end{proof}

    Now we formulate our main tool.

    \begin{proposition}\label{prop:record-property}
        Let $l\geq1$, and let $T\in\mathbb Z\setminus\{0\}$ and
        $\mathbf P\in\mathbb Z^n$. Then
        \begin{itemize}
            \item if $|T|<Q_l \ \ \ $, then
            $
                |T\alpha-\mathbf P|>|A_l|;
            $
            \item if $|T|<Q_{l+1}$, then
            $
                |T\alpha-\mathbf P|\geq|A_l|.
            $
        \end{itemize}
    \end{proposition}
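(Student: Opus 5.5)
The plan is to reduce both items to the defining record property of the best-approximation denominators, using the symmetric extension $\delta_{-q}=\delta_q$. First we pass from $T$ to $|T|$. Since $|\cdot|$ is a norm, $|T\alpha-\mathbf P|=|(-T)\alpha-(-\mathbf P)|$, so we may assume $T>0$. By the definition of $\delta_T$ as a minimum over $\mathbb Z^n$, we then have $|T\alpha-\mathbf P|\geq\delta_T$. Also $|A_l|=|\mathbf r(Q_l)|=\delta_{Q_l}$, because $\mathbf p(Q_l)$ was chosen to attain the minimum. So it suffices to prove $\delta_T>\delta_{Q_l}$ for $0<T<Q_l$, and $\delta_T\geq\delta_{Q_l}$ for $0<T<Q_{l+1}$.

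The first item is immediate. $Q_l=q_{k+l-1}$ is a denominator of a best approximation, so by definition $\delta_t>\delta_{Q_l}$ for every $0<t<Q_l$.

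For the second item we may assume $Q_l\leq T<Q_{l+1}$, since otherwise the first item already gives a strict inequality. If $T=Q_l$ the claim is an equality. Suppose $Q_l<T<Q_{l+1}$ and, for contradiction, $\delta_T<\delta_{Q_l}$. Consider the finite set $\{1,\dots,T\}$ and let $t_0$ be the smallest $t$ in it at which $\min_{1\leq t\leq T}\delta_t$ is attained. Then $\delta_{t_0}\leq\delta_T<\delta_{Q_l}$, and $\delta_t>\delta_{t_0}$ for all $0<t<t_0$. Hence $t_0$ is a best-approximation denominator, so $t_0=q_j$ for some $j$. It cannot be $\leq Q_l$: the case $t_0=Q_l$ contradicts $\delta_{t_0}<\delta_{Q_l}$, and the case $t_0<Q_l$ contradicts the first item. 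So $Q_l<t_0\leq T<Q_{l+1}$, and $t_0$ is a denominator strictly between two consecutive ones, $q_{k+l-1}$ and $q_{k+l}$. This contradicts the fact that the sequence $(q_m)$ lists all such denominators in increasing order.

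There is no serious obstacle here. The only points needing care are the sign reduction $T\mapsto -T$, including the convention $\delta_{-q}=\delta_q$, and the minimal-index argument that produces a genuine record denominator. The key detail is choosing the smallest minimizer, so that the strict inequality in the definition of $q_m$ holds at $t_0$.
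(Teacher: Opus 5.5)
Your proof is correct and follows the same route as the paper. Both arguments bound $|T\alpha-\mathbf P|$ below by $\delta_{|T|}$, use $|A_l|=\delta_{Q_l}$, and invoke the record property of $Q_l$ and $Q_{l+1}$. The only difference is that you spell out, via the smallest minimizer $t_0$ of $\delta_t$ on $\{1,\dots,T\}$, why no $t$ with $Q_l<t<Q_{l+1}$ can satisfy $\delta_t<\delta_{Q_l}$; the paper compresses this step into ``by the definition of the next best denominator.''
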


\begin{proof}
    Since $\mathbf P\in\mathbb Z^n$,
    $$
        |T\alpha-\mathbf P|\geq\delta_{|T|}.
    $$
    If $0<|T|<Q_l$, then by the definition of the best denominator
    $Q_l$,
    $$
        \delta_{|T|}>\delta_{Q_l}=|A_l|.
    $$
    If $0<|T|<Q_{l+1}$, then by the definition of the next best
    denominator $Q_{l+1}$,
    $$
        \delta_{|T|}\geq\delta_{Q_l}=|A_l|.
    $$
\end{proof}

    \begin{lemma}\label{rn:four-squares}
        For any $a,b,c\in\mathbb R^n$,
        $$
        |a+b-c|^2+|a-b+c|^2+|-a+b+c|^2+|a+b+c|^2
        =4(|a|^2+|b|^2+|c|^2).
        $$
    \end{lemma}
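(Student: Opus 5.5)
We will prove the identity by expanding each squared norm through the inner product, $|x|^2=\langle x,x\rangle$, and using bilinearity and symmetry of $\langle\cdot,\cdot\rangle$. This is the only property of the norm we need, which is exactly why the hypothesis that the norm is induced by an inner product appears.

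For a vector of the form $\varepsilon_1 a+\varepsilon_2 b+\varepsilon_3 c$ with signs $\varepsilon_i\in\{\pm1\}$, expansion gives
$$
|\varepsilon_1 a+\varepsilon_2 b+\varepsilon_3 c|^2=|a|^2+|b|^2+|c|^2+2\varepsilon_1\varepsilon_2\langle a,b\rangle+2\varepsilon_1\varepsilon_3\langle a,c\rangle+2\varepsilon_2\varepsilon_3\langle b,c\rangle .
$$
We then sum over the four sign patterns $(1,1,-1)$, $(1,-1,1)$, $(-1,1,1)$, $(1,1,1)$.

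The diagonal terms contribute $4(|a|^2+|b|^2+|c|^2)$. For the cross terms, it suffices to check that each pairwise product of signs sums to zero over the four patterns.
\begin{itemize}
\item The products $\varepsilon_1\varepsilon_2$ are $1,-1,-1,1$.
\item The products $\varepsilon_1\varepsilon_3$ are $-1,1,-1,1$.
\item The products $\varepsilon_2\varepsilon_3$ are $-1,-1,1,1$.
\end{itemize}
Each list sums to $0$, so all mixed inner products cancel and the identity follows.

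There is no real obstacle here; the only care needed is in tracking the signs in this cancellation. An equivalent way to see it is that the four sign vectors have pairwise sign-products forming balanced $\pm1$ columns, so every mixed term vanishes.
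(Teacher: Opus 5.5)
Your proposal is correct and follows essentially the same approach as the paper: you expand each square by bilinearity and note that each mixed inner product appears twice with each sign across the four sign patterns, so only $4(|a|^2+|b|^2+|c|^2)$ survives. Your sign-product lists are accurate and simply make explicit the paper's one-line remark.
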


    \begin{proof}
        After expansion, each mixed inner product occurs twice with each sign.
    \end{proof}

    \begin{proposition}\label{rn:no-triple}
        If $Q_{M+1}<Q_1+Q_s$, then there do not exist $1\leq a<b<c\leq M+1$ with $b\geq s$ such that
        $$
            \overline L_a+\overline L_b+\overline L_c=0.
        $$
    \end{proposition}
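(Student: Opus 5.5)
The plan is to argue by contradiction: assume $\overline L_a+\overline L_b+\overline L_c=0$, build three short lattice points by halving, and show that Lemma~\ref{rn:four-squares} forces their lengths to be too small for Proposition~\ref{prop:record-property}.

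\textbf{Halving.} Suppose $1\leq a<b<c\leq M+1$, $b\geq s$ and $\overline L_a+\overline L_b+\overline L_c=0$. Since $-X\equiv X \pmod 2$, each of the integer vectors
$$
L_a+L_b+L_c,\qquad -L_a+L_b+L_c,\qquad L_a-L_b+L_c,\qquad L_a+L_b-L_c
$$
has all coordinates even. Halving each gives an integer vector $(T,\mathbf P)$ whose associated real vector $T\alpha-\mathbf P$ is one half of the corresponding signed combination of $A_a,A_b,A_c$. Apply Lemma~\ref{rn:four-squares} with $(A_a,A_b,A_c)$ in place of $(a,b,c)$ and divide by $4$. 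The four halved vectors $v_0,v_1,v_2,v_3$, taken in the order listed, then satisfy
$$
|v_0|^2+|v_1|^2+|v_2|^2+|v_3|^2=|A_a|^2+|A_b|^2+|A_c|^2 .
$$
I will discard $v_0$ and use only $|v_0|^2\geq 0$. It therefore suffices to show
$$
|v_1|^2+|v_2|^2+|v_3|^2>|A_a|^2+|A_b|^2+|A_c|^2 .
$$

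\textbf{Key inequality and the bounds on $T$.} The hypothesis gives
$$
Q_c\leq Q_{M+1}<Q_1+Q_s\leq Q_a+Q_b,
$$
because $a\geq1$, $b\geq s$ and the $Q_i$ increase. The three relevant first coordinates are then controlled as follows.
\begin{itemize}
\item $T_3=(Q_a+Q_b-Q_c)/2$. It is positive by the key inequality, and $T_3<Q_a$ because $Q_b<Q_c$.
\item $T_1=(Q_b+Q_c-Q_a)/2$. It is positive, and $T_1<Q_c$ because $Q_b<Q_c+Q_a$.
\item $T_2=(Q_a+Q_c-Q_b)/2$. It is positive, and $T_2<Q_c$.
\end{itemize}
All three are nonzero, so Proposition~\ref{prop:record-property} applies to each.

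\textbf{Applying the record property.}
\begin{itemize}
\item For $v_3$, the first bullet with $l=a$ gives the strict bound $|v_3|>|A_a|$.
\item For $v_1$ and $v_2$, the second bullet with $l=c-1\geq b$ gives $|v_i|\geq|A_{c-1}|\geq|A_b|\geq|A_c|$, using that $|A_l|$ decreases in $l$.
\end{itemize}
Hence
$$
|v_1|^2+|v_2|^2+|v_3|^2>|A_b|^2+|A_c|^2+|A_a|^2,
$$
which contradicts the identity above.

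The main obstacle is choosing which record bound to pair with which halved vector. The vector $v_0$ has first coordinate $(Q_a+Q_b+Q_c)/2$, which is too large to control, so the argument only works if it is discarded. The pairing must then put the strict bound against $|A_a|$, the largest of the three norms. This is exactly where the hypothesis $Q_{M+1}<Q_1+Q_s$, through $Q_c<Q_a+Q_b$, is needed: it gives $T_3>0$ and $T_3<Q_a$.
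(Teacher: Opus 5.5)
There is a genuine gap in your bounds for $v_1$ and $v_2$. You write $|v_i|\geq|A_{c-1}|\geq|A_b|\geq|A_c|$ ``using that $|A_l|$ decreases in $l$'', but the middle inequality is reversed. Since $c-1\geq b$ and $|A_l|=\delta_{Q_l}$ is strictly decreasing, you have $|A_{c-1}|\leq|A_b|$, with equality only when $c=b+1$. From $0<T_1,T_2<Q_c$ alone you only get $|v_1|,|v_2|\geq|A_{c-1}|$ (or $>|A_c|$). When $c>b+1$ (for instance $b=s$, $c=M+1$, which is allowed for $n\geq2$), nothing forces $2|A_{c-1}|^2\geq|A_b|^2+|A_c|^2$. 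So your inequality $|v_1|^2+|v_2|^2+|v_3|^2>|A_a|^2+|A_b|^2+|A_c|^2$ is not established as written.

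The missing idea is that your key inequality $Q_c<Q_a+Q_b$ is the same as $Q_c-Q_b<Q_a$. This gives $0<T_2=(Q_a+Q_c-Q_b)/2<Q_a$, which is much stronger than $T_2<Q_c$. The first bullet of Proposition~\ref{prop:record-property} with $l=a$ then yields $|v_2|>|A_a|>|A_b|$. For $v_1$, the first bullet with $l=c$ yields $|v_1|>|A_c|$. With this pairing (two vectors against $|A_a|$, one against $|A_c|$) your sum inequality holds strictly, and the contradiction with $|v_0|^2\geq0$ follows. This is exactly the paper's argument: it proves $|U|,|V|>2|A_a|$, deduces $|W|^2+|Z|^2<4|A_c|^2$, and contradicts the record property at $Q_c$. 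Your closing remark that only one strict bound needs to go against $|A_a|$ is therefore where things go wrong. The hypothesis must be used twice, once for $T_3>0$ and once for $T_2<Q_a$.
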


    \begin{proof}
        Assume the contrary. Set
        $$
            U=A_a+A_b-A_c,\quad V=A_a-A_b+A_c,\quad
            W=-A_a+A_b+A_c,\quad Z=A_a+A_b+A_c.
        $$
        Since $\overline L_a+\overline L_b+\overline L_c=0$, the following vectors are integral
        $$
            \frac{L_a+L_b-L_c}{2},\quad
            \frac{L_a-L_b+L_c}{2},\quad
            \frac{-L_a+L_b+L_c}{2},\quad
            \frac{L_a+L_b+L_c}{2}.
        $$
        From
        $
            Q_a+Q_b-Q_c\geq Q_1+Q_s-Q_{M+1}>0
        $
        and $Q_b<Q_c$, we obtain
        $$
            0<\frac{Q_a+Q_b-Q_c}{2}<Q_a.
        $$
        Applying Proposition~\ref{prop:record-property} with
        $\displaystyle
            T=\frac{Q_a+Q_b-Q_c}{2}$, $\displaystyle 
            \mathbf P=\frac{\mathbf p(Q_a)+\mathbf p(Q_b)-\mathbf p(Q_c)}{2},
        $
        and $l=a$, we obtain
        $
            |U|>2|A_a|.
        $

\medskip

        Also,
        $
            Q_c-Q_b\leq Q_{M+1}-Q_s<Q_1\leq Q_a,
        $
        so
        $$
            0<\frac{Q_a-Q_b+Q_c}{2}<Q_a, \qquad \text{and similarly } \ |V|>2|A_a|.
        $$
        By Lemma~\ref{rn:four-squares},
        $$
        |W|^2+|Z|^2
        =4\bigl(|A_a|^2+|A_b|^2+|A_c|^2\bigr)-|U|^2-|V|^2
        <4\bigl(|A_b|^2+|A_c|^2-|A_a|^2\bigr)
        <4|A_c|^2.
        $$
        Hence $|W|<2|A_c|$. But
        $$
            0<\frac{-Q_a+Q_b+Q_c}{2}<Q_c,
        $$
        while the corresponding approximation vector is $W/2$, contradicting the definition of the best denominator $Q_c$.
    \end{proof}

    \begin{lemma}\label{rn:pair-sums}
        Let $A$ be a set of $2^{n-1}+1$ distinct elements of an $\mathbb F_2$-vector space. Then among the sums $x+y$ with $x,y\in A$, $x\ne y$, there are at least $2^n-1$ distinct elements.
    \end{lemma}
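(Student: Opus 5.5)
Because the paper relies on Kneser's theorem, I would prove Lemma~\ref{rn:pair-sums} by bounding the full sumset $A+A$ with Kneser's theorem and then discarding the single element $0$.

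\textbf{Step 1: reduce to the full sumset.} Write $A\hat{+}A=\{x+y: x,y\in A,\ x\neq y\}$ for the restricted sumset. In characteristic $2$ we have $x+x=0$ for every $x$. Hence
\[
A+A=(A\hat{+}A)\cup\{0\}
\quad\text{and}\quad
|A\hat{+}A|\geq |A+A|-1 .
\]
So it suffices to show $|A+A|\geq 2^n$. We may work in the finite subgroup $G$ generated by $A$, which is an elementary abelian $2$-group, so Kneser's theorem applies.

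\textbf{Step 2: apply Kneser.} Let $H$ be the stabilizer of $A+A$ in $G$, that is, $H=\{h: h+A+A=A+A\}$. It is a subgroup of order $2^h$ for some $h\geq 0$. Kneser's theorem~\cite{kneser} gives
\[
|A+A|\geq 2|A+H|-|H| .
\]
The set $A+H$ is a union of $H$-cosets, say $c$ of them, so $|A+H|=c\,2^h\geq |A|=2^{n-1}+1$.

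\textbf{Step 3: case analysis on $|H|$.}

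If $2^h\leq 2^{n-1}$, then $2^h$ divides $2^{n-1}$. The multiple $c\,2^h$ exceeds $2^{n-1}$, so $c\,2^h\geq 2^{n-1}+2^h$. Therefore
\[
|A+A|\geq 2\bigl(2^{n-1}+2^h\bigr)-2^h=2^n+2^h>2^n .
\]

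If $2^h\geq 2^n$, then $A+A$ is a nonempty union of $H$-cosets, so $|A+A|\geq |H|\geq 2^n$.

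In either case $|A+A|\geq 2^n$, and Step 1 gives $|A\hat{+}A|\geq 2^n-1$.

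\textbf{Main obstacle.} The only delicate point is the divisibility step in the first case. Since $|H|$ is a power of $2$, a multiple of $2^h$ that strictly exceeds $2^{n-1}$ must in fact be at least $2^{n-1}+2^h$. This is where the special size $2^{n-1}+1$ is used. Everything else is a direct citation of Kneser's theorem together with the identity $x+x=0$.
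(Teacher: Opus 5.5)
Your proposal is correct and follows the paper's proof essentially step for step. You apply Kneser's theorem to $A+A$ with its stabilizer $H$, split into the cases $|H|\geq 2^n$ and $|H|\leq 2^{n-1}$, use divisibility to get $|A+H|\geq 2^{n-1}+|H|$, and finally discard the single element $0=x+x$. Your remark that $A+A$ is a nonempty union of $H$-cosets supplies the justification for the case $|H|\geq 2^n$, which the paper leaves implicit.
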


    \begin{proof}
        Let $H$ be the stabilizer of $A+A$ and put $h=|H|$. By Kneser's theorem~\cite{kneser},
        $$
            |A+A|\geq2|A+H|-|H|.
        $$
        Put $M=2^n$. Since $h$ is a power of $2$, either $h\geq M$ or $h\leq M/2$. In the first case $|A+A|\geq M$. In the second case, since $h$ divides $M/2$ and $|A|=M/2+1$, the set $A+H$ contains at least $M/(2h)+1$ cosets of $H$. Hence
        $$
            |A+H|\geq M/2+h,
        $$
        and therefore
        $$
            |A+A|\geq M+h\geq M.
        $$
        In characteristic $2$, the zero element belongs to $A+A$, and every nonzero element of $A+A$ is a sum of two distinct elements of $A$. Thus there are at least $M-1=2^n-1$ distinct sums with $x\ne y$.
    \end{proof}

    \begin{proposition}\label{rn:distinct-tail}
        If $Q_{M+1}<Q_1+Q_s$, then the classes
        $$
            \overline L_s,\ldots,\overline L_{M+1}\in\mathbb F_2^{n+1}
        $$
        are pairwise distinct.
    \end{proposition}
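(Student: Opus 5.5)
Suppose $\overline L_i=\overline L_j$ for some $s\leq i<j\leq M+1$. We will derive a contradiction from the hypothesis $Q_{M+1}<Q_1+Q_s$.

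Since $L_i\equiv L_j \pmod 2$, both vectors
$$
\frac{L_j-L_i}{2}=\Bigl(\frac{Q_j-Q_i}{2},\,\frac{\mathbf p(Q_j)-\mathbf p(Q_i)}{2}\Bigr)^T,
\qquad
\frac{L_j+L_i}{2}
$$
are integral. Their approximation vectors are $(A_j-A_i)/2$ and $(A_j+A_i)/2$. By the parallelogram law (the norm is induced by an inner product),
$$
|A_j-A_i|^2+|A_j+A_i|^2=2\bigl(|A_i|^2+|A_j|^2\bigr).
$$

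First treat the difference. We have $0<Q_j-Q_i\leq Q_{M+1}-Q_s<Q_1\leq Q_i$, so $T=(Q_j-Q_i)/2$ is a nonzero integer with $|T|<Q_1$. The first part of Proposition~\ref{prop:record-property} with $l=1$ gives $|A_j-A_i|/2>|A_1|=R$. Since $|A_i|\leq R$, this yields $|A_j-A_i|>2R\geq 2|A_i|$.

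Now treat the sum. We have $T'=(Q_i+Q_j)/2<Q_j$, and $T'\neq 0$. The first part of Proposition~\ref{prop:record-property} with $l=j$ gives $|A_i+A_j|>2|A_j|$.

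Adding the squares of the two bounds,
$$
|A_j-A_i|^2+|A_j+A_i|^2>4|A_i|^2+4|A_j|^2>2\bigl(|A_i|^2+|A_j|^2\bigr),
$$
which contradicts the parallelogram law. Hence the classes $\overline L_s,\ldots,\overline L_{M+1}$ are pairwise distinct.

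The only delicate points are the strictness of the inequalities and the nonvanishing of $T$ and $T'$, and both are immediate. This does not even need Lemma~\ref{gcd L = 1}, although primitivity would exclude the degenerate coincidence $L_i=L_j$ in any case. The one real check is the chain $Q_j-Q_i<Q_1$, which is exactly where the hypothesis $Q_{M+1}<Q_1+Q_s$ together with $i\geq s$ enters.
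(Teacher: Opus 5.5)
Your proof is correct, and its first half is the paper's argument: $(L_j-L_i)/2$ is integral with $0<(Q_j-Q_i)/2<Q_1/2$, so Proposition~\ref{prop:record-property} bounds $|A_j-A_i|/2$ from below.

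The two proofs differ only in how they close. The paper applies the record property with $l=i$. It then gets the contradiction immediately from the triangle inequality and $|A_j|<|A_i|$:
$$|A_j-A_i|/2\le(|A_i|+|A_j|)/2<|A_i|.$$
Your bound $|A_j-A_i|>2R\ge 2|A_i|$ already contradicts this same triangle-inequality estimate. So the second application of the record property, to $(L_i+L_j)/2$, and the parallelogram law are superfluous.

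The extra step is not wrong, but it makes the argument depend on the inner-product structure. The paper's version shows that this proposition holds for an arbitrary norm. In the paper, the inner product is genuinely needed only in Proposition~\ref{rn:no-triple}, via Lemma~\ref{rn:four-squares}.
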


    \begin{proof}
        Let $s\leq i<j\leq M+1$ and suppose that $\overline L_i=\overline L_j$. Then $(L_j-L_i)/2\in\mathbb Z^{n+1}$, and
        $$
            0<\frac{Q_j-Q_i}{2}
            \leq\frac{Q_{M+1}-Q_s}{2}
            <\frac{Q_1}{2}<Q_i,
        $$
        $$
            \left|\frac{A_j-A_i}{2}\right|
            \leq\frac{|A_i|+|A_j|}{2}<|A_i|.
        $$
        Applying Proposition~\ref{prop:record-property} we obtain
        $$
            \left|\frac{A_j-A_i}{2}\right|>|A_i|,
        $$
        a contradiction.
    \end{proof}

    \begin{proposition}\label{rn:P-disjoint}
        If $Q_{M+1}<Q_1+Q_s$ and
        $$
            P=\{\overline L_i+\overline L_j:s\leq i<j\leq M+1\},
        $$
        $$
            \text{then} \quad
            P\cap\{\overline L_1,\ldots,\overline L_{M+1}\}=\varnothing.
        $$
    \end{proposition}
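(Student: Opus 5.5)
The plan is to argue by contradiction: suppose $\overline L_i+\overline L_j=\overline L_a$ for some $s\leq i<j\leq M+1$ and some $1\leq a\leq M+1$. Then $\overline L_a+\overline L_i+\overline L_j=0$, and I split into cases according to where $a$ sits relative to $i$ and $j$.

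\emph{Case $a\ne i,j$.} The three indices are distinct, so sort them as $a'<b'<c'$. Both $i$ and $j$ are at least $s$, so at least two of the three indices are $\geq s$, and hence the middle one satisfies $b'\geq s$. This is exactly the configuration excluded by Proposition~\ref{rn:no-triple}, since we are assuming $Q_{M+1}<Q_1+Q_s$. So this case is impossible.

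\emph{Case $a=i$ or $a=j$.} Then the relation reads $\overline L_j=0$ or $\overline L_i=0$. That is, some $L_m$ with $m\geq s$ has all coordinates even. This contradicts Lemma~\ref{gcd L = 1}, because $L_m=L(q_{k+m-1})$ is primitive and a primitive vector cannot lie in $2\mathbb Z^{n+1}$. In particular $\overline L_m\neq 0$ for every $m$, which also disposes of the degenerate situation.

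The only delicate point is checking that the hypothesis $b\geq s$ of Proposition~\ref{rn:no-triple} holds after reordering. It does, because two of the three indices are at least $s$, so the median is too. Distinctness of $i$ and $j$ is given. Distinctness from $a$ is exactly the case split above, so no appeal to Proposition~\ref{rn:distinct-tail} is needed.

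With both cases excluded, $P$ contains none of $\overline L_1,\ldots,\overline L_{M+1}$.
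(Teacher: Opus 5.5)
Your proof is correct and matches the paper's: primitivity (Lemma~\ref{gcd L = 1}) rules out the third index coinciding with $i$ or $j$. Reordering the three distinct indices then gives a vanishing triple whose middle index is $\geq s$, contradicting Proposition~\ref{rn:no-triple}; your median argument just spells out why the middle index is $\geq s$, which the paper leaves implicit.
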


    \begin{proof}
        Suppose that $\overline L_i+\overline L_j=\overline L_t$ for some $s\leq i<j\leq M+1$. By Lemma~\ref{gcd L = 1}, the classes $\overline L_i$ and $\overline L_j$ are nonzero, so $t$ is distinct from $i,j$. Ordering the three indices, we obtain
        $$
            \overline L_a+\overline L_b+\overline L_c=0
        $$
        with $b\geq s$, contradicting Proposition~\ref{rn:no-triple}.
    \end{proof}

    \begin{proposition}\label{rn:equal-classes-gap}
        If $1\leq i<j\leq M+1$ and $\overline L_i=\overline L_j$, then
        $$
            Q_j\geq Q_i+2Q_{i+1}.
        $$
    \end{proposition}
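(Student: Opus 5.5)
We argue by contradiction: assume $\overline L_i=\overline L_j$ with $i<j$ but $Q_j<Q_i+2Q_{i+1}$. The congruence makes
$$
    \frac{L_j-L_i}{2}\in\mathbb Z^{n+1}
    \qquad\text{and}\qquad
    \frac{L_j+L_i}{2}\in\mathbb Z^{n+1}.
$$
Their first coordinates are $T_-=(Q_j-Q_i)/2$ and $T_+=(Q_j+Q_i)/2$. The corresponding approximation vectors are
$$
    \frac{A_j-A_i}{2}\qquad\text{and}\qquad\frac{A_j+A_i}{2}.
$$
The parallelogram law (the two-vector analogue of Lemma~\ref{rn:four-squares}) gives
$$
    \left|\frac{A_j-A_i}{2}\right|^2+\left|\frac{A_j+A_i}{2}\right|^2=\frac{|A_i|^2+|A_j|^2}{2}<|A_i|^2,
$$
where the last step uses $|A_j|<|A_i|$ for $j>i$. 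Hence both halves have norm strictly less than $|A_i|$. In fact one of them has norm at most $\sqrt{(|A_i|^2+|A_j|^2)/4}<|A_i|/\sqrt2$, but the key point is that both are strictly below $|A_i|$.

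Next we compare with the record property. We have $0<T_-<T_+$, because Lemma~\ref{gcd L = 1} forces $L_i\neq\pm L_j$ and the $Q$'s are increasing. Since $|A_i|=\delta_{Q_i}$, Proposition~\ref{prop:record-property} applied with $l=i$ shows that any positive $T<Q_{i+1}$ has $|T\alpha-\mathbf P|\geq|A_i|$. So both $T_-$ and $T_+$ must be $\geq Q_{i+1}$. The inequality $T_-\geq Q_{i+1}$ means exactly $Q_j\geq Q_i+2Q_{i+1}$, which is the claim. So it suffices to show $T_-\geq Q_{i+1}$, and we do this directly rather than through $T_+$.

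The key step is therefore the following: from $|(A_j-A_i)/2|<|A_i|$ and $T_-\geq1$, deduce $T_-\geq Q_{i+1}$. Suppose instead $T_-<Q_{i+1}$. Then the second bullet of Proposition~\ref{prop:record-property} (with $T=T_-$, $\mathbf P=(\mathbf p(Q_j)-\mathbf p(Q_i))/2$, $l=i$) gives $|(A_j-A_i)/2|\geq|A_i|$, contradicting the strict inequality above. Hence $2T_-\geq 2Q_{i+1}$, i.e.\ $Q_j-Q_i\geq 2Q_{i+1}$.

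The only delicate points are nondegeneracy and strictness. We need $T_-\neq0$, which holds since $Q_j>Q_i$, and we need $|A_j|<|A_i|$ strictly, which holds because best approximations have strictly decreasing $\delta$. The main expected obstacle is making sure the strict inequality survives; the bound $|A_j-A_i|\leq|A_i|+|A_j|<2|A_i|$ already suffices even without the parallelogram law. If that turned out too weak, we would use the parallelogram identity to get the sharper bound $|A_i|/\sqrt2$, possibly working with $T_+$ as well.
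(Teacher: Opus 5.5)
Your proof is correct and is essentially the paper's argument: integrality of $(L_j-L_i)/2$, the bound $|(A_j-A_i)/2|\leq(|A_i|+|A_j|)/2<|A_i|$, and the second bullet of Proposition~\ref{prop:record-property} with $l=i$ together force $(Q_j-Q_i)/2\geq Q_{i+1}$. The parallelogram-law and $T_+$ detours are unnecessary, and Lemma~\ref{gcd L = 1} is not what gives $T_->0$ (primitivity does not rule out $L_i=\pm L_j$; $0<Q_i<Q_j$ does), but neither point affects correctness.
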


    \begin{proof}
        By assumption, $(L_j-L_i)/2\in\mathbb Z^{n+1}$. Also,
        $$
            \left|\frac{A_j-A_i}{2}\right|
            \leq\frac{|A_i|+|A_j|}{2}<|A_i|.
        $$
        If
        $
            \dfrac{Q_j-Q_i}{2}<Q_{i+1},
        $
        then Proposition~\ref{prop:record-property}, gives
        $
            \left|\dfrac{A_j-A_i}{2}\right|\geq|A_i|,
        $
        contrary to the strict inequality above. Therefore
        $
            Q_j-Q_i\geq2Q_{i+1}.
        $
    \end{proof}
    
    \textbf{\hspace{-24pt} Proof of Theorem~\ref{theorem:Rn}.}
    Assume the contrary. Then
    $$
        Q_{M+1}<Q_1+Q_s
        \qquad\text{and}\qquad
        Q_{M+1}<Q_1+2Q_2.
    $$
    By Proposition~\ref{rn:distinct-tail}, the $M/2+1$ classes $\overline L_s,\ldots,\overline L_{M+1}$ are distinct. Hence, by Lemma~\ref{rn:pair-sums}, the set $P$ from Proposition~\ref{rn:P-disjoint} contains at least $M-1$ distinct nonzero classes. By Proposition~\ref{rn:P-disjoint}, none of them coincides with any of $\overline L_1,\ldots,\overline L_{M+1}$.

    By Lemma~\ref{gcd L = 1}, all the classes $\overline L_1,\ldots,\overline L_{M+1}$ are nonzero. There are exactly
    $$
        2^{n+1}-1=2M-1
    $$
    nonzero elements in $\mathbb F_2^{n+1}$. Therefore the $M+1$ classes $\overline L_1,\ldots,\overline L_{M+1}$ take at most
    $$
        (2M-1)-(M-1)=M
    $$
    distinct values. Hence $\overline L_i=\overline L_j$ for some $i<j$. By Proposition~\ref{rn:equal-classes-gap},
    $$
        Q_{M+1}\geq Q_j\geq Q_i+2Q_{i+1}\geq Q_1+2Q_2,
    $$
    contradicting the assumption. \qed

\end{document}